\newenvironment{proof}{\medskip                    
\noindent{\scshape Proof:}}{\quad $\square$
\medskip}  
\newtheorem{theorem}{Theorem}[section]
\newtheorem{lemma}{Lemma}[section]
\newtheorem{proposition}{Proposition}[section]
\newtheorem{corollary}{Corollary}[section]
\newcommand {\cC} {{\cal C}}
\newcommand {\cG} {{\cal G}}
\newcommand {\cK} {{\cal K}}
\newcommand {\cT} {{\cal T}}
\newcommand{\cycles}{\cC}
\newcommand{\digr}{\cG}
\newcommand {\R} {{\mathbb R}}
\newcommand {\Rp} {\R_+}
\newcommand {\Rpn} {\R_+^n}
\newcommand {\Rpnn} {\Rp^{n\times n}}
\newcommand {\taccess}{\longrightarrow_{\tau}}
\newcommand {\taccesses}{\longrightarrow_{\tau}}
\newcommand {\nottaccess}{\not\longrightarrow_{\tau}}
\newcommand {\spann}{\operatorname{span}_{\oplus}}
\newcommand{\supp}{\operatorname{dom}}
\newcommand{\suppp}{\operatorname{supp}}
\newcommand{\cTadm}{\cT_{\operatorname{adm}}}
\newcommand{\cTag}{\cT_{\operatorname{ag}}}
\newcommand{\bunity}{1}
\newcommand{\bzero}{0}
\begin{document}

\title{Extremals of the supereigenvector cone in max algebra: a combinatorial description}

\author[rvt]{Serge{\u{\i}} Sergeev }
\ead{sergiej@gmail.com}

\address[rvt]{University of Birmingham, School of Mathematics, Edgbaston B15 2TT, UK}

\fntext[fn1]{Supported by EPSRC grant EP/J00829X/1, RFBR grant
12-01-00886.}

\begin{abstract}
We give a combinatorial description of extremal generators of the supereigenvector cone
$\{x\colon A\otimes x\geq x\}$ in max algebra.
\end{abstract}

\begin{keyword}
Eigenvectors, eigenvalues, max-plus, max algebra, digraph \vskip0.1cm {\it{AMS
Classification:}} 15A80, 15A06, 15A18
\end{keyword}

\maketitle

\section{Introduction}

By max algebra we understand the semiring of nonnegative numbers $\Rp$ equipped with arithmetical
operations of ``tropical addition'' $a\oplus b=\max(a,b)$ (instead of the usual one), and the
ordinary multiplication. See~ Butkovi\v{c}~\cite{But} for one of the recent textbooks, as well as
Heidergott, Olsder and van der Woude~\cite{HOW}
for another textbook explaining a typical application of max algebra to scheduling problems.
These arithmetical operations are extended to matrices and vectors in the usual way: for two matrices
$A$ and $B$ of appropriate sizes, we have
$(A\oplus B)_{ij}=a_{ij}\oplus b_{ij}$ and $(A\otimes B)_{ik}=\bigoplus_j a_{ij}b_{jk}$.
We also consider the max-algebraic powers of matrices: $A^{\otimes t}=\underbrace{A\otimes\ldots\otimes A}_t$.

With each square matrix $A\in\Rpnn$ we can associate a weighted directed digraph 
$\digr(A)=(N,E)$ with set of nodes $N=\{1,\ldots,n\}$ and edges $E=\{(i,j)\mid a_{ij}\neq 0\}$. 
Each matrix entry $a_{ij}$ is the weight of edge $(i,j)$. 

A sequence of edges 
$(i_1,i_2),\ldots,(i_{k-1},i_k)$ of $\digr(A)$ is called a {\em walk}. The {\em length} of this walk
is $k-1$,  and the {\em weight}
of this walk is defined as $a_{i_1i_2}\cdot\ldots\cdot a_{i_{k-1}i_k}$. Node $i_1$ is called the 
{\em beginning node}, and $i_k$ is called the {\em final node} of that walk. If $i_1=i_k$ then the
walk is called a {\em cycle}.

It is easy to see that the $i,j$ entry of the
max-algebraic power $A^{\otimes t}$ is equal to the greatest weight of a walk of length 
$t$ beginning at $i$ and ending at $j$.  Considering the formal series
\begin{equation}
\label{e:kls}
A^*=I\oplus A\oplus A^{\otimes 2}\oplus\ldots\oplus A^{\otimes k}\oplus\ldots,
\end{equation}
called the {\em Kleene star} of $A$ we see that the $i,j$ entry of $A^*$ is equal to the greatest weight
among all walks connecting $i$ to $j$ with no restriction on weight. This greatest weight is
defined for all $i,j$ if and only if $\digr(A)$ does not have cycles with weight exceeding $1$, 
otherwise~\eqref{e:kls} diverges, or more precisely, 
some entries of $A^*$ diverge to $+\infty$.

In this paper we consider the problem of describing the set of supereigenvectors of a given square matrix
$A\in\Rpnn$.  These are vectors $x$ satisfying $A\otimes x\geq x$, so we are interested in the set
\begin{equation}
\label{e:v*a}
V^*(A)=\{x\colon A\otimes x\geq x\}.
\end{equation} 
Supereigenvectors  are of interest for several reasons.  Let us first mention that the problem which is solved in this paper
was posed by  Butkovi\v{c}, Schneider and Sergeev~\cite{BSSws}, where the supereigenvectors
were shown to be instrumental in the analysis of the sequences $\{A^k\otimes x\colon k\geq 1\}$. A partial solution to that problem has been described by Wang and Wang~\cite{Wang}.

Furthermore, the set
of max-algebraic eigenvectors of $A$ (here, associated with eigenvalue $1$)  
and the set of subeigenvectors of $A$ defined, respectively, as 
\begin{equation}
V(A)=\{x\colon A\otimes x=x\},\quad V_*(A)=\{x\colon A\otimes x\leq x\},
\end{equation}
have been well studied and thoroughly described in the literature. 
Let us also mention that $A\otimes x\geq x$ belongs to the class of two-sided systems $A\otimes x\leq B\otimes x$, whose 
polynomial solvability is still under question, while it is known that the problem is in the intersection of
NP and co-NP classes, see for instance Bezem, Nieuwenhuis and Rodriguez-Carbonell~\cite{RC+}. A number of algorithms solving this general problem
and describing the full solution set have been 
designed: see, in particular, the double description method of~ Allamigeon, Gaubert and Goubault~\cite{AGG}. 


$V(A)$, $V^*(A)$ and $V_*(A)$ are examples of max cones. Recall that a subset of $\Rpn$ is
called a max cone if it is closed under addition $\oplus$ of its elements, and under the usual
scalar multiplication.  The description that we seek is in terms of max-algebraic generating sets 
and bases. Let us recall some definitions that are necessary here.

An element $u\in\Rpn$ is called a {\em max combination} of elements $v^1,\ldots, v^m\in \Rpn$
if there exist scalars $\lambda_1,\ldots,\lambda_m\in\Rp$ such that 
$u=\bigoplus_{i=1}^m \lambda_i v^i$.  Further $S\subseteq \Rpn$ is called a {\em generating set} for a max cone $\cK$ if every element of $\cK$ can be represented as a max combination of some elements of $S$.
If $S$ is a generating set of $\cK$, we write $\cK=\spann(S)$. 
Further, $S$ is called
a {\em basis} if none of the elements of $S$ is a max combination of other elements of $S$.

An element $u$ of a max cone $\cK\subseteq\Rpn$ is called an {\em extremal}, if whenever $u=v\oplus w$  and $v,w\in\cK$, we have $u=v$ or $u=w$. An element $u\in\Rpn$ is called {\em scaled} if $\max_{i=1}^n u_i=1$.
A basis of a max cone is called {\em scaled} if so is every element of that basis.


\begin{proposition}[{\cite{BSS},\cite{GK-07}}]
\label{p:base-extr}
For any closed max cone $\cK\subseteq\Rpn$, let $E$ be the set of scaled 
extremals.  Then $E$ is non-empty, $\cK=\spann(E)$ 
and, furthermore, $E$ is a unique scaled basis of $\cK$.
\end{proposition}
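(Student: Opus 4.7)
The plan is to establish the three assertions in turn: existence of scaled extremals, the spanning property, and uniqueness of the basis, the first being the key step.

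The cornerstone is the following lemma. For any $x \in \cK \setminus \{0\}$ and any index $i$ with $x_i > 0$, the set
\[
F_i(x) = \{y \in \cK : y \leq x,\ y_i = x_i\}
\]
is nonempty (it contains $x$) and compact, being the intersection of the closed cone $\cK$ with the compact box $[0,x_1] \times \cdots \times [0,x_n]$ and the closed hyperplane $\{y_i = x_i\}$. By Zorn's lemma, applicable since descending chains in $F_i(x)$ have componentwise limits lying in $F_i(x)$ by closedness, $F_i(x)$ has a componentwise-minimal element $e$. I claim that $e$ is extremal in $\cK$: if $e = v \oplus w$ with $v, w \in \cK$, then $v, w \leq e$, and $v_i \oplus w_i = e_i = x_i$ forces, say, $v_i = x_i$, placing $v \in F_i(x)$; minimality then gives $v = e$, so $w \leq v$ and $e = v$.

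With this lemma, both nonemptiness and the spanning property follow easily. Given a scaled $x \in \cK$, apply the lemma at each $i$ with $x_i > 0$ to obtain an extremal $e^{(i)} \leq x$ with $e^{(i)}_i = x_i$. Then $\bigoplus_i e^{(i)} \leq x$, and agrees with $x$ at every coordinate, hence equals $x$. Normalising each $e^{(i)}$ by its max-norm yields a scaled extremal (extremality is invariant under positive scaling), so $x \in \spann(E)$ and $E \neq \emptyset$.

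For uniqueness, let $B$ be any scaled basis. Every scaled extremal $u$ lies in $B$: expanding $u = \bigoplus_k \lambda_k b^k$ with $b^k \in B$ and applying extremality forces $u = \lambda_k b^k$ for some $k$, and since both $u$ and $b^k$ are scaled this gives $\lambda_k = 1$ and $u = b^k$. Conversely, every $b \in B$ is extremal: given a decomposition $b = v \oplus w$ in $\cK$, expand $v, w$ in $B$ to write $b = \bigoplus_k \gamma_k b^k$. If the coefficient $\gamma$ of $b$ itself is $\geq 1$, then $v$ or $w$ dominates $b$ and, being $\leq b$, must equal $b$; otherwise $\gamma < 1$, so the $\gamma b$ term is strictly dominated on the support of $b$ by the other terms, expressing $b$ as a max combination of $B \setminus \{b\}$ and contradicting the basis property. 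Hence $E = B$.

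The main obstacle is the key lemma — specifically, verifying that a componentwise-minimal element of $F_i(x)$ must be extremal, together with its existence via compactness/Zorn. Once the lemma is in place, the spanning property is a coordinate-by-coordinate check and uniqueness is essentially formal bookkeeping.
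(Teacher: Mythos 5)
Your argument is correct, and since the paper states this proposition without proof (citing \cite{BSS} and \cite{GK-07}), the relevant comparison is with those references: your route --- obtaining extremals as minimal elements of the compact sections $\{y\in\cK\colon y\leq x,\ y_i=x_i\}$, then spanning coordinate-by-coordinate and checking that any scaled basis must coincide with the set of scaled extremals --- is essentially the standard proof given there. The only caveats are cosmetic: the spanning step should be stated for arbitrary $x\in\cK$ (reduce to the scaled case by rescaling), and the nonemptiness of $E$ of course presupposes $\cK\neq\{0\}$.
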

 
 
It is easy to see that the max cones $V^*(A)$, $V_*(A)$ and $V(A)$ are closed, so that 
Proposition~\ref{p:base-extr} applies to them. In fact, all these cones
have a finite number of scaled extremals, which constitute their essentially unique bases.
Our purpose will be to describe the generating set of the supereigenvector cone $V^*(A)$ and then to single out those
generators that are extremals and thus form a basis of $V^*(A)$.

The rest of the paper is organized as follows. In Section 2 we describe a generating set of the supereigenvector cone. This description, obtained in Theorem~\ref{t:kitai}, is equivalent to the result of Wang
and Wang~\cite{Wang}, but it is obtained using a more geometric ``cellular decomposition''
technique. 
In Section 3 we give criteria under which the generators described in Section 2 are extremals.  This description
and these criteria are combinatorial in nature, and expressed in terms of certain cycles of the digraph associated 
with the matrix (namely, cycles whose weight is not less than $1$). This description is the main result of the paper,
formulated in Theorems~\ref{t:criter},~\ref{t:criter-var} and~\ref{t:unitycase}. These results, to the author's knowledge, are new.

\section{Generating sets}

Let $A\in\Rpnn$.
A mapping $\tau$ of a subset of $[n]$ into itself will be called a
{\em (partial) strategy} of $\digr(A)$. Given a strategy $\tau$ we
can define the matrix $A^{\tau}=(a^{\tau}_{i,j})$ by
\begin{equation}
\label{e:ataudef}
a^{\tau}_{i,j}=
\begin{cases}
a_{i,j}, & \text{if $j=\tau(i)$},\\
0, & \text{otherwise}.
\end{cases}
\end{equation}
By domain of $\tau$, denoted by $\supp(\tau)$, we mean the set of indices $i$ for which $\tau(i)$ is
defined, that is, the index subset which $\tau$ maps into itself. 

If $\tau$ is a strategy then its inverse,
denoted by $\tau-$, is, in general, a multivalued mapping of a subset of
$\supp(\tau)$ to the whole $\supp(\tau)$ . Define the matrix $A^{\tau-}=(a^{\tau-}_{i,j})$
by
\begin{equation}
\label{e:atau-1def}
a^{\tau-}_{i,j}=
\begin{cases}
a^{-1}_{j,i}, & \text{if $i=\tau(j)$},\\
0, & \text{otherwise}.
\end{cases}
\end{equation}

Consider the associated digraphs $\digr(A^{\tau})$ and
$\digr(A^{\tau-})$ (see Figure~\ref{f:digr}.
Let us list some properties of $\digr(A^{\tau})$.

\begin{figure}
\begin{tabular}{cccc}

\begin{tikzpicture}[shorten >=1pt,->,scale=1]

\tikzstyle{vertex}=[circle,draw]

\foreach \name/\angle/\text in {1/0/1, 2/300/2,3/240/3, 4/180/4, 5/120/5, 6/60/6}
\node[vertex,xshift=0cm,yshift=0cm] (\name) at (\angle:1.7cm) {$\text$};

\draw node[vertex,xshift=-1.5cm,yshift=-3.0cm] (7)  {$7$};

\draw node[vertex,xshift=-2cm,yshift=-4.3cm] (8)  {$8$};

\draw node[vertex,xshift=-1cm,yshift=-4.3cm] (9)  {$9$};


\draw node[vertex,xshift=1.5cm,yshift=-3.0cm] (11)  {$11$};

\draw node[vertex,xshift=2cm,yshift=-4.3cm] (12)  {$12$};


\draw [->] (1) to node[right] {$a_{1,2}$} (2);

\draw [->] (2) to node[below] {$a_{2,3}$}(3);

\draw [->] (3) to node[left] {$a_{3,4}$} (4);
\draw [->] (4) to node[left] {$a_{4,5}$} (5);
\draw [->] (5) to node[above] {$a_{5,6}$} (6);
\draw [->] (6) to node[right] {$a_{6,1}$} (1);


\draw [->] (8) to node[left] {$a_{8,7}$}   (7);

\draw [->] (9) to node[right] {$a_{9,7}$}   (7);

\draw [->] (7) to node[left] {$a_{7,3}$}  (3);

\draw [->] (12) to node[right] {$a_{12,11}$}(11);
\draw [->] (11) to node[right] {$a_{11,2}$} (2);


\end{tikzpicture}
&&&

\begin{tikzpicture}[shorten >=1pt,->,scale=1]

\tikzstyle{vertex}=[circle,draw]

\foreach \name/\angle/\text in {1/0/1, 2/300/2,3/240/3, 4/180/4, 5/120/5, 6/60/6}
\node[vertex,xshift=0cm,yshift=0cm] (\name) at (\angle:1.7cm) {$\text$};

\draw node[vertex,xshift=-1.5cm,yshift=-3.0cm] (7)  {$7$};

\draw node[vertex,xshift=-2cm,yshift=-4.3cm] (8)  {$8$};

\draw node[vertex,xshift=-1cm,yshift=-4.3cm] (9)  {$9$};


\draw node[vertex,xshift=1.5cm,yshift=-3.0cm] (11)  {$11$};

\draw node[vertex,xshift=2cm,yshift=-4.3cm] (12)  {$12$};

\draw [->] (2) to node[left] {$a_{1,2}^{-1}$} (1);

\draw [->] (3) to node[below] {$a_{2,3}^{-1}$} (2);

\draw [->] (4) to node[left] {$a_{3,4}^{-1}$} (3);
\draw [->] (5) to node[left] {$a_{4,5}^{-1}$}(4);
\draw [->] (6) to node[above] {$a_{5,6}^{-1}$} (5);
\draw [->] (1) to node[right] {$a_{6,1}^{-1}$} (6);


\draw [->] (7) to node[left] {$a_{8,7}^{-1}$} (8);

\draw [->] (7) to node[right] {$a_{9,7}^{-1}$} (9);

\draw [->] (3) to node[left] {$a_{7,3}^{-1}$}(7);

\draw [->] (11) to node[right] {$a_{11,2}^{-1}$} (12);
\draw [->] (2) to node[right] {$a_{12,11}^{-1}$} (11);


\end{tikzpicture}

\end{tabular}

\caption{Digraphs $\digr(A^{\tau})$ and $\digr(A^{\tau-})$ 
for $A\in\R_+^{11}$ (an example)  \label{f:digr}}
\end{figure}

\begin{lemma}
\label{l:digratau}
\begin{itemize}
\item[{\rm (i)}] For every pair of nodes of $[n]$, either there is a unique walk in
$\digr(A^{\tau})$ connecting one of these nodes to the other, or there is no such
walk.
\item[{\rm(ii)}] $\digr(A^{\tau})$ contains at least one cycle.
\item[{\rm (iii)}] For each node of $\supp(\tau)$, there is a unique
cycle of $\digr(A^{\tau})$ that can be accessed from this node via a walk in
$\digr(A^{\tau})$, which is also unique.
\item[{\rm (iv)}] For each node of $\supp(\tau)$, there are no nodes
that can be accessed from it by a walk of $\digr(A^{\tau})$  other than
the nodes of the unique cycle and the unique access walk mentioned in
(iii).
\end{itemize}
\end{lemma}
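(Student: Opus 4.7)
The plan is to exploit the fact that the definition \eqref{e:ataudef} makes $\digr(A^{\tau})$ a \emph{functional digraph}: every node $i\in\supp(\tau)$ has exactly one outgoing edge, namely $(i,\tau(i))$, while every node outside $\supp(\tau)$ has no outgoing edge at all. Once this is observed, all four parts of the lemma reduce to elementary facts about iteration of the partial function $\tau$.

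First, I would note that from any node $i$ the walk in $\digr(A^{\tau})$ is completely forced: the $k$-th node visited is $\tau^k(i)$ whenever this is defined, and the walk terminates as soon as it reaches a node outside $\supp(\tau)$. This is essentially a restatement of the functional property, and from it (i) follows directly, since a walk from $i$ to $j$ exists precisely when $j=\tau^k(i)$ for some $k\geq 0$, in which case the sequence $i,\tau(i),\ldots,\tau^k(i)$ provides the unique such walk.

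For (ii), I would use that $\tau$ maps $\supp(\tau)$ into itself; so starting at any $i\in\supp(\tau)$ the trajectory $i,\tau(i),\tau^2(i),\ldots$ remains in the finite set $\supp(\tau)$ and, by pigeonhole, eventually repeats a vertex. The segment between the first repetition and its earlier occurrence is then a cycle of $\digr(A^{\tau})$. For (iii), given $i\in\supp(\tau)$, I would let $k$ be the smallest index with $\tau^k(i)=\tau^m(i)$ for some $m<k$; then the loop $\tau^m(i),\tau^{m+1}(i),\ldots,\tau^{k-1}(i),\tau^m(i)$ is a cycle $C$ accessed from $i$ via the walk $i,\tau(i),\ldots,\tau^m(i)$, and the uniqueness of both follows from (i). Finally, (iv) is an immediate consequence: any node reachable from $i$ equals $\tau^j(i)$ for some $j\geq 0$, and by construction these are exactly the vertices of the access walk together with those of $C$.

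I do not foresee any essential obstacle; the whole lemma is a graph-theoretic translation of the functional nature of $\tau$. The only mildly delicate point is (ii), which tacitly assumes $\supp(\tau)\neq\emptyset$, for otherwise $\digr(A^{\tau})$ has no edges at all; once this convention is fixed, (ii) is really a corollary of (iii) applied to an arbitrary $i\in\supp(\tau)$.
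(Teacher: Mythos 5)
Your argument is correct and is precisely the routine verification the paper omits: the lemma is stated without proof, being an immediate consequence of the fact that $\digr(A^{\tau})$ is a functional digraph in which every node of $\supp(\tau)$ has exactly one outgoing edge and every other node has none. Your caveats are also well placed — part (ii) does tacitly assume $\supp(\tau)\neq\emptyset$, and the "uniqueness" of walks in (i) and (iii) must be read as uniqueness of the walk that does not traverse the cycle more than once, an imprecision inherited from the statement itself rather than from your proof.
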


\if{
Let us also write some very similar properties of $\digr(A^{\tau-})$.
\begin{lemma}
\label{l:digratau-1}
\begin{itemize}
\item[{\rm (i)}] For every pair of nodes of $[n]$, either there is a unique walk in
$\digr(A^{\tau-})$ connecting one of these nodes
to the other, or there is no such walk.
\item[{\rm(ii)}] $\digr(A^{\tau-})$ contains at least one cycle.
\item[{\rm (iii)}] For each node of $\supp(\tau)$, there is a unique
cycle of $\digr(A^{\tau-})$ that accesses  this node via a walk in
$\digr(A^{\tau-})$, which is also unique.
\item[{\rm (iv)}] For each node of $\supp(\tau)$, there are no nodes
that access it via a walk of $\digr(A^{\tau-})$ other than
the nodes of the unique cycle and the unique access walk mentioned in
(iii).
\end{itemize}
\end{lemma}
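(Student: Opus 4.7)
The plan is to observe that $\digr(A^{\tau-})$ is precisely the edge-reversal of $\digr(A^{\tau})$, so that Lemma~\ref{l:digratau} transfers verbatim after reversing all arrows and swapping ``accessed from'' with ``accesses''. Concretely, from the definitions~\eqref{e:ataudef} and~\eqref{e:atau-1def}, an edge $(i,j)$ is present in $\digr(A^{\tau-})$ iff $\tau(j)=i$, which is iff the edge $(j,i)$ is present in $\digr(A^{\tau})$. Hence the operation of reversing the vertex sequence of a walk $i_1\to i_2\to\cdots\to i_k$ yields a bijection between walks of $\digr(A^{\tau-})$ and walks of $\digr(A^{\tau})$ that preserves the set of nodes visited and carries cycles to cycles.

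Using this reversal bijection, I would deduce each of (i)--(iv) from the corresponding item of Lemma~\ref{l:digratau}. For (i), a walk from $u$ to $v$ in $\digr(A^{\tau-})$ corresponds to a walk from $v$ to $u$ in $\digr(A^{\tau})$, so uniqueness/non-existence transfers. For (ii), the bijection maps cycles to cycles, and Lemma~\ref{l:digratau}(ii) supplies at least one cycle in $\digr(A^{\tau})$ whose reversal lies in $\digr(A^{\tau-})$. For (iii), fix $i\in\supp(\tau)$; a walk in $\digr(A^{\tau-})$ from a cycle $C^-$ to $i$ reverses to a walk in $\digr(A^{\tau})$ from $i$ to the corresponding cycle $C$, so the uniqueness of $C$ and of the access walk given by Lemma~\ref{l:digratau}(iii) yields the uniqueness of $C^-$ and of its access walk to $i$. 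For (iv), the set of nodes in $\digr(A^{\tau-})$ from which $i$ can be accessed equals, under the reversal bijection, the set of nodes accessible from $i$ in $\digr(A^{\tau})$; Lemma~\ref{l:digratau}(iv) identifies the latter with the nodes of the unique cycle and unique access walk of (iii), and these are the same nodes as in the reversed picture.

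There is essentially no combinatorial obstacle: the proof is a pure duality argument. The only point requiring care is to track the direction of ``accessing'' correctly when translating back and forth — in $\digr(A^{\tau})$ each node of $\supp(\tau)$ has out-degree exactly one (so forward orbits are deterministic and furnish (iii)--(iv) there), whereas in $\digr(A^{\tau-})$ each node of $\supp(\tau)$ has in-degree exactly one, so it is backward orbits that are deterministic. Keeping this asymmetry in mind, the bijection cleanly converts the forward statements of Lemma~\ref{l:digratau} into the backward statements required here.
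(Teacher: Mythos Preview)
Your proposal is correct and matches the paper's (implicit) approach: the paper states Lemma~\ref{l:digratau-1} immediately after Lemma~\ref{l:digratau} with the remark that these are ``very similar properties of $\digr(A^{\tau-})$,'' offering no separate proof, so the intended argument is precisely the edge-reversal duality you spell out. Your verification that $(i,j)\in\digr(A^{\tau-})\Leftrightarrow(j,i)\in\digr(A^{\tau})$ from definitions~\eqref{e:ataudef} and~\eqref{e:atau-1def}, together with the observation that this swaps out-degree-one for in-degree-one and hence forward for backward determinism, is exactly what is needed.
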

}\fi

A strategy $\tau$ is called {\em admissible} if there is no cycle in
$\digr(A^{\tau})$ whose weight is smaller than $\bunity$. In this case, there is no cycle of
$\digr(A^{\tau-})$ whose weight is greater than $\bunity$, hence we have
$\lambda(A^{\tau-})\leq \bunity$.

The set of all admissible strategies is denoted by
$\cTadm(A)$. Let us argue that the set of all supereigenvectors can be represented as
union of the sets of subeigenvectors of $A^{\tau-}$ with
$\tau$ ranging over all admissible strategies.

\begin{proposition}
\label{p:repr}
\begin{equation}
\label{e:repr}
V^*(A)=\bigcup_{\tau\in\cTadm(A)} V^*(A^{\tau})
= \bigcup_{\tau\in\cTadm(A)} V_*(A^{\tau-})
\end{equation}
\end{proposition}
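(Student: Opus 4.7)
The plan is to split the proof into two containments establishing the first equality $V^*(A)=\bigcup_\tau V^*(A^\tau)$, followed by a direct unpacking of the inequalities giving the second equality $V^*(A^\tau)=V_*(A^{\tau-})$.

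The easy direction $\bigcup_\tau V^*(A^\tau)\subseteq V^*(A)$ is immediate: for \emph{any} strategy $\tau$ (admissible or not) the entrywise bound $A^\tau\leq A$ yields $A\otimes x\geq A^\tau\otimes x\geq x$ whenever $A^\tau\otimes x\geq x$, so no combinatorial content is used here.

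The nontrivial direction is $V^*(A)\subseteq \bigcup_{\tau\in\cTadm(A)} V^*(A^\tau)$, which is where the strategy $\tau$ is constructed greedily from $x$. Given $x\in V^*(A)$, for each index $i$ with $x_i>0$ the inequality $(A\otimes x)_i\geq x_i$ forces the existence of some $j$ with $a_{ij}x_j\geq x_i$; such a $j$ automatically satisfies $x_j>0$. I would pick one such $j$ for each such $i$ and set $\tau(i)=j$, with $\supp(\tau):=\{i:x_i>0\}$. The positivity check ensures $\tau$ maps $\supp(\tau)$ into itself, so it is a bona fide partial strategy, and by construction $(A^\tau\otimes x)_i\geq x_i$ on $\supp(\tau)$ while both sides vanish off $\supp(\tau)$; hence $x\in V^*(A^\tau)$. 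Admissibility follows from a telescoping argument: along any cycle $i_1\to i_2\to\ldots\to i_k=i_1$ of $\digr(A^\tau)$ one has $a_{i_s,i_{s+1}}\geq x_{i_s}/x_{i_{s+1}}$, so
$$\prod_s a_{i_s,i_{s+1}}\geq \prod_s \frac{x_{i_s}}{x_{i_{s+1}}}=1.$$

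For the second equality, I would unpack the definitions entry by entry: $A^\tau\otimes x\geq x$ reduces to $a_{i,\tau(i)}x_{\tau(i)}\geq x_i$ for $i\in\supp(\tau)$, which is equivalent (by multiplying by $a_{i,\tau(i)}^{-1}>0$) to $a_{i,\tau(i)}^{-1}x_i\leq x_{\tau(i)}$; taking the max over the preimages $\tau^{-1}(k)$ gives exactly the $k$-th row of $A^{\tau-}\otimes x\leq x$. Thus the two sets coincide, with the understanding that in both descriptions the coordinates outside $\supp(\tau)$ are implicitly zero (the rows and columns of both $A^\tau$ and $A^{\tau-}$ outside $\supp(\tau)$ vanish, so only the coordinates on $\supp(\tau)$ carry any information).

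The main obstacle is not a deep one but rather careful bookkeeping with partial strategies. The two points one must not gloss over are: (i)~checking that the greedy choice $\tau(i)=j$ in the second containment never escapes $\supp(\tau)$, which is precisely where the positivity implication $x_j>0$ is used; and (ii)~maintaining a consistent convention for the inactive coordinates $[n]\setminus\supp(\tau)$ when comparing $V^*(A^\tau)$ with $V_*(A^{\tau-})$, so that the equivalence of the two combinatorial systems of inequalities translates cleanly into equality of the two sets.
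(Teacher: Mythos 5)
Your proof is correct and takes essentially the same route as the paper's: construct $\tau$ on $\supp(\tau)=\{i\colon x_i>0\}$ from the inequality $(A\otimes x)_i\geq x_i$, check that $\tau$ maps its domain into itself and is admissible by telescoping the inequalities around a cycle, use $A^{\tau}\leq A$ for the reverse inclusion, and unpack the entrywise inequalities for $V^*(A^{\tau})=V_*(A^{\tau-})$. The only cosmetic difference is that the paper defines $\tau(i)$ as an argmax of $a_{i,j}x_j$ whereas you take any $j$ with $a_{i,j}x_j\geq x_i$ (both work), and your explicit remark about the convention for the coordinates outside $\supp(\tau)$ is, if anything, more careful than the paper's treatment of the second equality.
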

\begin{proof}
To prove that
\begin{equation}
\label{e:repr1}
V^*(A)=\bigcup_{\tau\in\cTadm(A)} V^*(A^{\tau}),
\end{equation}
 observe that
every vector $x$ satisfying $A\otimes x\geq x$ also satisfies
$A^{\tau}\otimes x\geq x$ for some (partial) mapping $\tau$ which
can be defined as follows:
\begin{equation}
\supp(\tau)=\{i\colon x_i\neq \bzero\},\quad \tau(i)\colon\  a_{i,\tau(i)} x_{\tau(i)}
=\max_j a_{i,j} x_j .
\end{equation}
The choice of $\tau(i)$ among the indices attaining maximum is free,
any such index can be taken for $\tau(i)$.

It can be verified that if $x_i>\bzero$ then $a_{i,\tau(i)} x_{\tau(i)}>\bzero$,
hence $x_{\tau(i)}>0$, thus $\tau$ maps $\supp(\tau)$ into itself,
so it is a strategy. To check that it is admissible let $i,\tau(i),\ldots,\tau^{\ell}(i)=i$
constitute a cycle, so we have $a_{i,\tau(i)}x_{\tau(i)}\geq x_i$, ...,
$a_{\tau^{\ell-1}(i),i} x_i \geq x_{\tau^{\ell-1}(i)}$. Multiplying up all these inequalities
and cancelling the product of $x_i$'s we get that the cycle weight is not less than
$\bunity$. This shows that $\tau$ is admissible. To complete the proof
of~\eqref{e:repr1} observe that $A\otimes x\geq A^{\tau}\otimes x\geq x$ for
every mapping $\tau$ and every vector $x$ satisfying
$A^{\tau}\otimes x\geq x$.

It remains to check that  $V^*(A^{\tau}) =V_*(A^{\tau-})$ for every partial
mapping $\tau$. We have
\begin{equation}
\label{e:v^*v_*}
\begin{split}
V^*(A^{\tau})=& \{x\colon a_{i,\tau(i)}x_{\tau(i)}\geq x_i\;\forall i\in\supp(\tau)\}=\\
&\{x\colon (a_{i,\tau(i)})^{-1}x_i\leq  x_{\tau(i)}  \;\forall i\in\supp(\tau)\}
=V_*(A^{\tau-}).
\end{split}
\end{equation}
Combined with~\eqref{e:repr1}, this implies~\eqref{e:repr}.
\end{proof}

Thus the cones $V^*(A^{\tau})=V_*(A^{\tau-})$, with $\tau$ ranging over
all admissible strategies, can be considered as building blocks of
$V^*(A)$. Hence the generating set of $V^*(A)$  can be formed as
the union of all generating sets of $V^*(A^{\tau})=V_*(A^{\tau-})$:
these are the generating sets of subeigenvector cones.
A generating set for a general subeigenvector cone $V_*(A)$ is easy to find.

\begin{proposition}[{e.g.~\cite{BCOQ},~\cite{But},~\cite{Ser-09}}]
\label{p:subeig-kleene}
Let $A\in\Rp^{n\times n}$ be such that the weight of any cycles of $\digr(A)$ does not exceed one.
Then $V_*(A)=\spann(A^*)$.
\end{proposition}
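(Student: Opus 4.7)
The plan is to prove the two inclusions $\spann(A^*) \subseteq V_*(A)$ and $V_*(A) \subseteq \spann(A^*)$ by direct max-algebraic manipulation, using the hypothesis on cycles only to ensure $A^*$ is well-defined.

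First I would justify that $A^*$ is a well-defined matrix with entries in $\Rp$. Using the interpretation recalled in the introduction that $(A^{\otimes t})_{ij}$ is the greatest weight of a walk of length $t$ from $i$ to $j$, the hypothesis that every cycle of $\digr(A)$ has weight $\leq \bunity$ implies that among all walks from $i$ to $j$, we can restrict to walks without repeated nodes (any cycle subpath can be removed without decreasing the weight). Hence the series in \eqref{e:kls} is actually the finite truncation $A^* = I\oplus A\oplus\ldots\oplus A^{\otimes(n-1)}$, and its entries are finite.

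Second, for the inclusion $\spann(A^*)\subseteq V_*(A)$, it suffices to show that every column of $A^*$ belongs to $V_*(A)$, since $V_*(A)$ is a max cone. This reduces to the matrix inequality $A\otimes A^*\leq A^*$. Directly,
\begin{equation*}
A\otimes A^* = A\oplus A^{\otimes 2}\oplus\ldots \leq I\oplus A\oplus A^{\otimes 2}\oplus\ldots = A^*,
\end{equation*}
which gives the claim column by column.

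Third, for the reverse inclusion, take $x\in V_*(A)$, i.e.\ $A\otimes x\leq x$. Iterating this inequality yields $A^{\otimes k}\otimes x\leq x$ for every $k\geq 0$, and together with $I\otimes x=x$ this gives $A^*\otimes x\leq x$ by taking the max over $k$. Conversely, since $I\leq A^*$ entrywise, we have $x=I\otimes x\leq A^*\otimes x$. Combining the two inequalities yields $x=A^*\otimes x=\bigoplus_{j=1}^n (A^*)_{\cdot j}\, x_j$, which exhibits $x$ as a max combination of the columns of $A^*$, hence $x\in\spann(A^*)$.

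I do not anticipate a genuine obstacle: each step is a short formal manipulation in the $(\oplus,\otimes)$ semiring, and the cycle-weight hypothesis is used only to guarantee finiteness of $A^*$. The only place that needs care is noting that $I$ is included as the $k=0$ term of the Kleene star, which is what simultaneously gives $A^*\geq I$ (used in the second inequality of the third step) and makes $A\otimes A^*\leq A^*$ a strict submatrix comparison in the second step.
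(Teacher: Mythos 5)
Your proof is correct. The paper itself gives no proof of this proposition, citing it as a standard fact from the literature, and your argument (finiteness of $A^*$ from the cycle condition, $A\otimes A^*\leq A^*$ for one inclusion, and $A^*\otimes x=x$ for any $x$ with $A\otimes x\leq x$ for the other) is precisely the classical proof found in the cited references.
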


We now specialize this description to $V_*(A^{\tau-})$. For this purpose, let us denote by $k\taccess i$ the situation
when $k=i$ or $k$ can be connected to $i$ by a walk on $\digr(A^{\tau})$.
In this case, the unique walk
connecting $k$ to $i$ on $\digr(A^{\tau})$ will be denoted by
$P^{\tau}_{ki}$.

\begin{proposition}
\label{p:subtau}
Let $\tau$ be an admissible strategy. Then $V^*(A^{\tau})=V_*(A^{\tau-})$
is generated by the vectors $x^{(\tau,k)}$ for $k=1,\ldots,n$, whose coordinates
are defined as follows:
\begin{equation}
\label{e:xkdef}
x^{(\tau, k)}_i=
\begin{cases}
\bunity, & \text{if $i=k$},\\
(w(P^{\tau}_{ki}))^{-1}, & \text{if $i\neq k$ and $k\taccess i$},\\
\bzero, & \text{if $i\neq k$ and $k\nottaccess i$}.
\end{cases}
\end{equation}
\end{proposition}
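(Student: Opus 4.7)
The plan is to derive this proposition directly from Proposition~\ref{p:subeig-kleene} applied to $A^{\tau-}$, by identifying the vectors $x^{(\tau,k)}$ with the columns of the Kleene star $(A^{\tau-})^*$.

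First, since $\tau$ is admissible, every cycle of $\digr(A^{\tau})$ has weight at least $\bunity$, so upon inversion every cycle of $\digr(A^{\tau-})$ has weight at most $\bunity$. This is precisely the hypothesis of Proposition~\ref{p:subeig-kleene}, which yields $V_*(A^{\tau-})=\spann((A^{\tau-})^*)$. Combining with the identity $V^*(A^{\tau})=V_*(A^{\tau-})$ already shown in~\eqref{e:v^*v_*}, it suffices to prove that the $k$-th column of $(A^{\tau-})^*$ equals $x^{(\tau,k)}$ for every $k$.

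Next I would unwind what an entry $((A^{\tau-})^*)_{ik}$ means: it is the largest weight of a walk from $i$ to $k$ in $\digr(A^{\tau-})$. By construction~\eqref{e:atau-1def}, an edge $(p,q)$ exists in $\digr(A^{\tau-})$ with weight $a_{q,p}^{-1}$ iff $(q,p)$ is an edge of $\digr(A^{\tau})$, so walks from $i$ to $k$ in $\digr(A^{\tau-})$ are in weight-inverting bijection with walks from $k$ to $i$ in $\digr(A^{\tau})$. By Lemma~\ref{l:digratau}(i), such a walk is unique when it exists, and when $k\taccess i$ it is exactly $P^{\tau}_{ki}$, of weight $w(P^{\tau}_{ki})$. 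Hence $((A^{\tau-})^*)_{ik}=(w(P^{\tau}_{ki}))^{-1}$ whenever $k\taccess i$ and $i\neq k$, and the entry is $\bzero$ when $k\nottaccess i$ and $i\neq k$. Matching these values against~\eqref{e:xkdef} gives the non-diagonal entries.

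The one subtle point (and the only real obstacle) is the diagonal entry $i=k$. There the identity term in the Kleene series contributes $\bunity$, but one must check that no cycle through $k$ in $\digr(A^{\tau-})$ could push this entry above $\bunity$; this is guaranteed by admissibility of $\tau$, which forces every cycle in $\digr(A^{\tau-})$ to have weight at most $\bunity$. Thus $((A^{\tau-})^*)_{kk}=\bunity$, matching the first case of~\eqref{e:xkdef}. Combining all three cases, the columns of $(A^{\tau-})^*$ are precisely the vectors $x^{(\tau,k)}$, and the proposition follows.
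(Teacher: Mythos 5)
Your proposal is correct and follows essentially the same route as the paper: apply Proposition~\ref{p:subeig-kleene} to $A^{\tau-}$, then identify the columns of $(A^{\tau-})^*$ with the vectors $x^{(\tau,k)}$ via the weight-inverting correspondence between walks in $\digr(A^{\tau-})$ and $\digr(A^{\tau})$ and the uniqueness of access walks from Lemma~\ref{l:digratau}. Your explicit check that admissibility keeps the diagonal entries equal to $\bunity$ is a point the paper states without elaboration, but it is the same argument.
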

\begin{proof}
By Proposition~\ref{p:subeig-kleene}, $V_*(A^{\tau-})=
\spann((A^{\tau-})^*)$, so it amounts to argue that the columns of
$(A^{\tau-})^*$ are exactly $x^{(\tau,1)},\ldots, x^{(\tau, n)}$. This claim follows
by the optimal walk interpretation of the entries $\alpha_{i,k}$ of
$(A^{\tau-})^*$: we obtain that $\alpha_{i,k}=x^{(\tau,k)}_i$ as defined in~\eqref{e:xkdef},
Indeed, recalling that $\alpha_{kk}=\bunity$ for all $k$,
 $i$ accesses $k$ in $\digr(A^{\tau-})$ if and only if $k$ accesses $i$
in $\digr(A^{\tau})$ and that
the weight of the unique access walk from $i$ to $k$ in $\digr(A^{\tau-})$ is the
reciprocal of the weight of the unique access walk from $k$ to $i$
in $\digr(A^{\tau})$, we obtain the claim from the optimal walk interpretation
of the entries of the Kleene star.
\end{proof}

Denote by $\cycles^{\geq 1}(A)$, respectively by $\cycles^{>1}(A)$,  the set of cycles
in $\digr(A)$ whose weight is not less than $\bunity$,
respectively greater by $1$. 

If $\digr(A^{\tau})$, for a strategy $\tau$, consists of one cycle and
one non-empty walk connecting its origin to a node of that cycle, then $\tau$ is called a
{\em germ}. The origin of that walk will be denoted
by $o_{\tau}$. If the weight of the cycle is no less than $1$ then the germ is called {\em admissible}.
The set of all admissible
germs in $\digr(A)$ will be denoted by $\cTag(A)$.

Obviously, both $\cycles^{\geq 1}(A)\subseteq\cTadm(A)$ and $\cTag(A)\subseteq\cTadm(A)$.
The following theorem describes
a generating set of $V^*(A)$ by means of nonnegative cycles and admissible germs.

\begin{theorem}
\label{t:kitai}
We have $V^*(A)=\spann(S)$ where
\begin{equation}
\label{e:kitai}
S=\{x^{(\tau,o_{\tau})}\colon \tau\in\cTag(A)\}\cup\{x^{(\tau,k)}\colon\tau\in\cycles^{\geq 1}(A),\,k\in \supp(\tau)\}.
\end{equation}
\end{theorem}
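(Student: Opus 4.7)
The plan is to combine Propositions~\ref{p:repr} and~\ref{p:subtau} to obtain a first (redundant) generating set of $V^*(A)$, and then prune it to the set $S$ by observing that every vector $x^{(\tau,k)}$ depends only on the subgraph of $\digr(A^\tau)$ accessible from $k$, which by Lemma~\ref{l:digratau} is either a cycle or a germ.

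More precisely, I first invoke Proposition~\ref{p:repr} to write $V^*(A)=\bigcup_{\tau\in\cTadm(A)}V^*(A^\tau)$, and then Proposition~\ref{p:subtau} to conclude
\[
V^*(A)=\spann(S_0),\qquad S_0=\{x^{(\tau,k)}\colon \tau\in\cTadm(A),\ k\in\supp(\tau)\}.
\]
It remains to show that $\spann(S_0)=\spann(S)$, which I intend to prove by the stronger statement that every element of $S_0$ already coincides with some element of $S$ (and conversely $S\subseteq S_0$, which is immediate from the fact that cycles in $\cycles^{\geq 1}(A)$ and germs in $\cTag(A)$ are themselves admissible strategies).

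For the nontrivial inclusion, fix $\tau\in\cTadm(A)$ and $k\in\supp(\tau)$. By Lemma~\ref{l:digratau}(iii)--(iv), from $k$ one can access in $\digr(A^\tau)$ a unique cycle $C$ via a unique walk $W$, and the nodes of $W\cup C$ are the only nodes accessible from $k$. I then define a substrategy $\tau'$ of $\tau$ supported exactly on $W\cup C$ (which collapses to $C$ itself when $k$ already lies on $C$). Because admissibility of $\tau$ forces every cycle of $\digr(A^\tau)$ to have weight $\geq \bunity$, the cycle $C$ has weight $\geq\bunity$, and hence $\tau'$ is either an element of $\cycles^{\geq 1}(A)$ (case $k\in C$) or an element of $\cTag(A)$ with $o_{\tau'}=k$ (case $k\notin C$). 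In either case, the set $\{i\colon k\taccess i\}$ and all the walks $P^\tau_{ki}$ used to define $x^{(\tau,k)}$ in formula~\eqref{e:xkdef} are unchanged when $\tau$ is replaced by $\tau'$, so $x^{(\tau,k)}=x^{(\tau',k)}\in S$.

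The main obstacle here is not computational but conceptual: making the passage from $\tau$ to $\tau'$ fully rigorous requires the uniqueness statements in Lemma~\ref{l:digratau} to guarantee both that restricting $\tau$ does not create new cycles (so $\tau'$ remains admissible) and that the access walks $P^\tau_{ki}$ and their weights are preserved, so that the coordinate formula~\eqref{e:xkdef} yields literally the same vector. Once this reduction is established, the theorem follows directly, as the ``trivially admissible'' strategies that remain after the pruning are precisely the cycles in $\cycles^{\geq 1}(A)$ and the germs in $\cTag(A)$.
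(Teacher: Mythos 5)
Your proposal is correct and follows essentially the same route as the paper: combine Propositions~\ref{p:repr} and~\ref{p:subtau} to get a redundant generating set, then restrict each admissible strategy $\tau$ to the set of nodes accessible from $k$ and use Lemma~\ref{l:digratau}(iii)--(iv) to see that the restricted strategy is a cycle in $\cycles^{\geq 1}(A)$ or a germ with origin $k$, with the generator $x^{(\tau,k)}$ unchanged. No substantive differences to report.
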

\begin{proof}
Since every admissible germ and every nonnegative cycle is an admissible strategy, inclusion
$V_*(A^{\tau-})\subseteq V^*(A)$ and Proposition~\ref{p:subtau}
imply that
$$\spann\{x^{(\tau,o_{\tau})}\colon \tau\in\cTag(A)\}\subseteq V^*(A)$$
and 
$$\spann\{x^{(\tau,k)}\colon\tau\in\cycles^{\geq 1}(A),\,k\in \supp(\tau)\}\subseteq V^*(A)$$.

To prove the opposite inclusion, observe that for each 
$x^{(\tau,k)}$ of~\eqref{e:xkdef} we can define a new strategy $\tau'$ by
\begin{equation}
\supp(\tau')=\{i\colon k\taccess i\},\quad
\tau'(l)=\tau(l)\ \text{if $l\in\supp(\tau')$},
\end{equation}
and then we have $x^{(\tau',k)}=x^{(\tau,k)}$. We now argue that $\tau'$ is a more simple strategy than
$\tau$. 

By Lemma~\ref{l:digratau} part (iv), in $\digr(A^{\tau})$ node $k$ only
accesses one nonnegative cycle and the nodes on the unique walk leading to that cycle.
It follows that either $\tau'\in\cycles^{\geq 1}$ and then $k\in \supp(\tau')$,
or $\tau'\in\cTadm(A)$ and $k=o_{\tau'}$
Hence for every 
generator 
of $V^*(A)$, expressed as  $x^{(\tau,k)}$ , there exists $\tau'$ which is either a nonnegative
cycle with one of the nodes being $k$, or it is an admissible germ and $k=o_{\tau'}$, in any case such that
$x^{(\tau',k)}=x^{(\tau,k)}$. 
This implies that
$$
V^*(A)\subseteq \spann\{x^{(\tau,o_{\tau})}\colon \tau\in\cTag(A)\}\oplus
\spann\{x^{(\tau,k)}\colon\tau\in\cycles^{\geq 1}(A),\,k\in\supp(\tau)\}$$. The theorem
is proved.`
\end{proof}

\section{Extremals}

Let us introduce the following partial order relation.

\begin{equation}
\label{e:leqi}
y\leq_i x\;\text{if $x_i\neq 0$, $y_i\neq 0$ and}\; y_ky_i^{-1}\leq x_kx_i^{-1}\;\forall k.
\end{equation}

In particular, this relation is transitive:

\begin{equation}
x\leq_i y\leq_i z \Rightarrow x\leq_i z.
\end{equation}

The following fact is known, see~\cite[Proposition 3.3.6]{But} or~\cite[Theorem 14]{BSS}, and
also~\cite{Ser-09}.

\begin{proposition}
\label{p:multiorder}
Let $S\subseteq\Rpn$ and $\cK=\spann(S)$. Then $x$ is not an extremal of
$\cK$ if and only if for each $i\in\suppp(x)$ there exists $y^i\in S$ such that
$y^i\leq_i x$ and $y^i\neq x$.
\end{proposition}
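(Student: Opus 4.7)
\medskip\noindent\textbf{Proof sketch.} The plan is to prove both directions by exploiting how the relation $\leq_i$ translates between coordinatewise inequalities and scaled max-combinations in $\cK=\spann(S)$.

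For the ``if'' direction, suppose that for each $i\in\suppp(x)$ we are given $y^i\in S$ with $y^i\leq_i x$ and $y^i\neq x$. Rescale each to $\tilde y^i:=x_i(y^i_i)^{-1}\cdot y^i\in\cK$. By the definition of $\leq_i$, this rescaling makes $\tilde y^i\leq x$ coordinatewise with equality at coordinate $i$, so
\[
x=\bigoplus_{i\in\suppp(x)}\tilde y^i,
\]
the right-hand side agreeing with $x$ on $\suppp(x)$ and vanishing elsewhere. Each $\tilde y^i$ is genuinely different from $x$ (the condition ``$y^i\neq x$'' is naturally read up to scaling, i.e.\ $y^i$ is not a scalar multiple of $x$). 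Splitting this max-sum into two nonempty parts, which is possible whenever $|\suppp(x)|\geq 2$, expresses $x=v\oplus w$ with $v,w\in\cK\setminus\{x\}$, showing $x$ is not extremal. The borderline case $|\suppp(x)|=1$ forces $x$ to be a coordinate vector, which is always extremal and for which the hypothesis cannot be met anyway.

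For the converse, suppose $x=v\oplus w$ with $v,w\in\cK\setminus\{x\}$. Expanding $v$ and $w$ via $S$ yields a representation $x=\bigoplus_\alpha\lambda_\alpha s^\alpha$ with $s^\alpha\in S$ in which every summand satisfies $\lambda_\alpha s^\alpha\leq v$ or $\lambda_\alpha s^\alpha\leq w$, and hence $\lambda_\alpha s^\alpha\neq x$. For each $i\in\suppp(x)$ the maximum at coordinate $i$ must be attained, so some $\alpha(i)$ satisfies $\lambda_{\alpha(i)}s^{\alpha(i)}_i=x_i$; in particular $s^{\alpha(i)}_i\neq 0$. Dividing the coordinatewise inequality $\lambda_{\alpha(i)}s^{\alpha(i)}\leq x$ by $x_i$ gives precisely $s^{\alpha(i)}\leq_i x$, so $y^i:=s^{\alpha(i)}$ is a legitimate candidate generator.

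The only delicate point is that this $y^i$ can always be chosen so as not to be a scalar multiple of $x$, which is the intended reading of ``$y^i\neq x$''. This, however, is automatic: were $s^{\alpha(i)}=\kappa x$ for some scalar $\kappa$, then $\lambda_{\alpha(i)}s^{\alpha(i)}_i=x_i$ would force $\lambda_{\alpha(i)}\kappa=1$, and hence $\lambda_{\alpha(i)}s^{\alpha(i)}=x$, contradicting the fact that no summand in the chosen representation equals $x$. With this observation in place, both implications of the proposition are established.
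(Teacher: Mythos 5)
The paper does not actually prove this proposition; it is quoted as known from \cite[Proposition 3.3.6]{But} and \cite[Theorem 14]{BSS}, so there is no in-text argument to compare against. Your strategy is the standard one: rescale each $y^i$ so that it touches $x$ at coordinate $i$ and lies below $x$ everywhere, recover $x$ as the max of these rescalings, and, conversely, extract from a nontrivial decomposition $x=v\oplus w$ a generator attaining the maximum at each coordinate of $\suppp(x)$. Your ``only if'' direction is complete and correct, including the final observation that a summand proportional to $x$ would have to equal $x$. Your reading of ``$y^i\neq x$'' as ``$y^i$ not proportional to $x$'' is also the right one: with the literal reading the statement is false (take $n=1$... or rather $S=\{(1,0),(2,0)\}$ and $x=(1,0)$, which is extremal yet admits $y^1=(2,0)\leq_1 x$ with $y^1\neq x$), and it agrees with the cited sources, where all vectors are scaled.

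There is one genuine gap, in the ``if'' direction: the assertion that splitting $\bigoplus_{i\in\suppp(x)}\tilde y^i$ into two nonempty parts ``expresses $x=v\oplus w$ with $v,w\in\cK\setminus\{x\}$'' is false for an arbitrary split, and can fail for \emph{every} split of the full index set. Take $x=(1,1,1)$ with $\tilde y^1=(1,1,0)$, $\tilde y^2=(0,1,1)$, $\tilde y^3=(1,0,1)$: every bipartition of $\{1,2,3\}$ has a block of size two, and the max of any two of these vectors is already $x$, so one of $v,w$ always equals $x$. (The conclusion still holds here, via $v=\tilde y^1$, $w=\tilde y^2$, but that is not a partition of $\suppp(x)$.) The repair is short: choose $J\subseteq\suppp(x)$ minimal with $\bigoplus_{i\in J}\tilde y^i=x$; then $|J|\geq 2$ because each single $\tilde y^i$ differs from $x$, and for any $i_0\in J$ the pair $v=\tilde y^{i_0}$, $w=\bigoplus_{i\in J\setminus\{i_0\}}\tilde y^i$ satisfies $v\oplus w=x$, $v\neq x$ by hypothesis, and $w\neq x$ by minimality of $J$. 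With this correction (and your correct remark that for $|\suppp(x)|=1$ the hypothesis is unsatisfiable under the non-proportionality reading), the proof is sound.
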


We consider the case when $\cK=V^*(A)$. A generating set of this max cone
is given in Theorem~\ref{t:kitai}. Our purpose is to identify extremals, which yield
an essentially unique basis of $V^*(A)$, by means of the criterion
described in Proposition~\ref{p:multiorder}.


We first show that for each $\tau$ and $k$, there is a relation between
$x^{(\tau,k)}$ and $x^{(\tau,\tau(k))}$, with respect to every preorder relation except for $\leq_k$.

\begin{lemma}
\label{l:xtau}
Let $\tau\in\cTag(A)$ and $k=o_{\tau}$ or $\tau\in\cycles^{\geq 1}(A)$ and $k\in \supp(\tau)$.
\begin{itemize}
\item[{\rm (i)}]   $x^{(\tau,\tau(k))}\leq_j x^{(\tau,k)}$ for all $j\in\supp(\tau)$ and $j\neq k$.
\item[{\rm (ii)}]  $x^{(\tau,\tau(k))}\neq x^{(\tau,k)}$ if and only if $\tau\in\cTag(A)$ or
$\tau\in\cycles^{>1}(A)$.
\end{itemize}
\end{lemma}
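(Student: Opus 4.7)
The plan is to reduce both parts to a single shift identity linking the two vectors, and then to examine the one coordinate where that identity fails. Setting $\alpha := a_{k,\tau(k)}$, the key observation is that in $\digr(A^{\tau})$, by the uniqueness of walks asserted in Lemma~\ref{l:digratau}(i), every walk starting at $k$ uses the edge $(k,\tau(k))$ as its first step. Consequently, for every $i\neq k$ with $k\taccess i$ one has $w(P^{\tau}_{ki}) = \alpha\cdot w(P^{\tau}_{\tau(k),i})$ (under the convention $w(P^{\tau}_{\tau(k),\tau(k)})=1$), and reading this through definition~\eqref{e:xkdef} yields the shift identity $x^{(\tau,k)}_i = \alpha^{-1}\, x^{(\tau,\tau(k))}_i$ for all $i\neq k$. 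At $i=k$ this shift breaks: $x^{(\tau,k)}_k = 1$, while $x^{(\tau,\tau(k))}_k = (w(P^{\tau}_{\tau(k),k}))^{-1}$ if $k$ lies on the cycle of $\tau$ (so that $\tau(k)\taccess k$), and $x^{(\tau,\tau(k))}_k = 0$ otherwise. From Lemma~\ref{l:digratau}(iv) together with the case hypotheses on $\tau$ and $k$, every $j\in\supp(\tau)$ is reached from both $k$ and $\tau(k)$, so for $j\neq k$ both vectors have nonzero $j$-th coordinate.

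For part (i), fix $j\in\supp(\tau)$ with $j\neq k$. The shift identity gives $x^{(\tau,\tau(k))}_j = \alpha\, x^{(\tau,k)}_j$, so the required inequality $x^{(\tau,\tau(k))} \leq_j x^{(\tau,k)}$ reduces to $x^{(\tau,\tau(k))}_i \leq \alpha\, x^{(\tau,k)}_i$ for every $i$. This is automatic for $i\neq k$ (equality by the shift), and for $i=k$ with $k$ off the cycle (left side is zero). The only genuine use of the hypothesis happens when $i=k$ lies on the cycle, where the inequality rearranges to $\alpha\cdot w(P^{\tau}_{\tau(k),k}) \geq 1$; but this product is precisely the weight of the cycle of $\tau$, which is at least $1$ by admissibility.

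For part (ii), the shift identity shows that $x^{(\tau,\tau(k))}$ and $x^{(\tau,k)}$ coincide, up to the global factor $\alpha$, on every coordinate except $i=k$. Hence they are proportional if and only if the match persists at $k$, i.e.\ $x^{(\tau,\tau(k))}_k = \alpha\cdot x^{(\tau,k)}_k = \alpha$. In the germ case $\tau\in\cTag(A)$ with $k=o_{\tau}$, node $k$ lies off the cycle, so $x^{(\tau,\tau(k))}_k = 0 \neq \alpha$ and the two vectors cannot be proportional. In the cycle case $\tau\in\cycles^{\geq 1}(A)$ with $k\in\supp(\tau)$, the condition $x^{(\tau,\tau(k))}_k = \alpha$ reduces to $\alpha\cdot w(P^{\tau}_{\tau(k),k}) = 1$, which is exactly the weight of the cycle being $1$; hence the vectors fail to coincide precisely when the cycle weight strictly exceeds $1$. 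Combining these alternatives yields the stated dichotomy.

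The main point requiring attention is simply the single bad coordinate $i=k$, where the uniform ``$\alpha^{-1}$-shift'' between the two vectors is broken; this is also the only place where the admissibility hypothesis on $\tau$ is actually invoked, via the cycle-weight bound $\alpha\cdot w(P^{\tau}_{\tau(k),k})\geq 1$.
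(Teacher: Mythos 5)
Your proof is correct and follows essentially the same route as the paper's: both arguments reduce everything to the walk-weight ratios $(x_j)^{-1}x_i=w(P^{\tau}_{ij})$ away from coordinate $k$ (your ``shift identity'' $x^{(\tau,k)}_i=a_{k,\tau(k)}^{-1}x^{(\tau,\tau(k))}_i$ is a repackaging of this) and then isolate the single coordinate $i=k$, where the cycle-weight bound $a_{k,\tau(k)}\,w(P^{\tau}_{\tau(k),k})=w(\tau)\geq 1$ does the work. The only point worth flagging is that for $\tau\in\cycles^{=1}(A)$ your computation, exactly like the paper's, establishes proportionality $x^{(\tau,\tau(k))}=a_{k,\tau(k)}\,x^{(\tau,k)}$ rather than literal equality; this scale-invariant reading of part (ii) is the intended one, as the paper itself confirms in the discussion preceding Theorem~\ref{t:unitycase}.
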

\begin{proof}
Let $\tau\in\cTag(A)$ and $k=o_{\tau}$. Then $\suppp(x^{(\tau,\tau(k))})=\supp(\tau)
\backslash\{k\}$, and in particular,
$x^{(\tau,\tau(k))}\neq x^{(\tau,k)}$. As we also have 
$ (x_j^{(\tau,\tau(k))})^{-1} x_i^{(\tau,\tau(k))}=
 (x_j^{(\tau,k)})^{-1} x_i^{(\tau,k)}=w(P_{ij}^{\tau})$ 
for all $i,j\neq k$, claim (i) follows, in the case when $\tau\in\cTag(A)$.

Let $\tau\in\cycles^{\geq 1}(A)$. Then $\suppp(x^{(\tau,\tau(k))})= \suppp(x^{(\tau,k)})=\supp(\tau)$, and
$$\left(x_j^{(\tau,\tau(k))}\right)^{-1}x_i^{(\tau,\tau(k))}= \left(x_j^{(\tau,k)}\right)^{-1}
x_i^{(\tau,k)}=w(P^{\tau}_{ij})\  \text{for}\ i,j\neq k.$$
However, we also
have
\begin{equation}
\label{xktau-ineq}
\left(x_i^{(\tau,\tau(k))}\right)^{-1}\cdot  x_k^{(\tau,\tau(k))}\leq 
\left(x_i^{(\tau,k)}\right)^{-1}\cdot x_k^{(\tau,k)}\ \text{for}\ i\neq k,
\end{equation}
since
\begin{equation}
\label{xktau-expl}
\begin{split}
&x_k^{(\tau,k)}\left(x_i^{(\tau,k)}\right)^{-1}= w(P^{\tau}_{ki}),\quad
x_k^{(\tau,\tau(k))}\left(x_i^{(\tau,\tau(k))}\right)^{-1}=(w(P^{\tau}_{ik}))^{-1},\\
&w(P^{\tau}_{ki})w(P^{\tau}_{ik})=w(\tau)\geq\bunity.
\end{split}
\end{equation}
Furthermore, we have $x^{(\tau,\tau(k))}\neq x^{(\tau,k)}$
if and only if the inequality in~\eqref{xktau-ineq} is strict, which happens if and only if $w(\tau)>\bunity$.
Hence both claims.
\end{proof}

If $\tau\in\cTag(A)$ and $k=o_{\tau}$ or $\tau\in\cycles^{\geq 1}(A)$ and $k\in \supp(\tau)$, there is a unique walk
issuing from $k$ and containing all nodes of $\supp(\tau)$. Denote the final node of that
walk by $\operatorname{endn}(\tau,k)$.

\begin{corollary}
\label{c:xtau}
Let $\tau\in\cTag(A)$ and $k=o_{\tau}$ or 
$\tau\in\cycles^{\geq 1}(A)$ and $k\in \supp(\tau)$. Let $i\neq k$
be any index in $\supp(\tau)$. Then $x^{(\tau,i)}\leq_i x^{(\tau,k)}$.
\end{corollary}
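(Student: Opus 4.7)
The plan is to iterate Lemma~\ref{l:xtau}(i) along the $\tau$-trajectory from $k$ to $i$ and then collapse the resulting chain via transitivity of $\leq_i$. Under both cases of the hypothesis on $(\tau,k)$, the structural description of $\digr(A^{\tau})$ in Lemma~\ref{l:digratau} ensures that every node of $\supp(\tau)$ is accessible from $k$, so there is a smallest integer $m\geq 1$ with $\tau^m(k)=i$. Writing $k_t:=\tau^t(k)$ for $0\leq t\leq m$, minimality forces the $k_t$ to be pairwise distinct: any repetition $k_s=k_{s'}$ with $s<s'\leq m$ would allow one to cut out a loop and produce a shorter trajectory from $k$ to $i$.

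The subtlety is that for $t\geq 1$ the pair $(\tau,k_t)$ itself need not satisfy the hypothesis of Lemma~\ref{l:xtau} (in the germ case $k_t\neq o_{\tau}$). To remedy this I replace $\tau$ by its restriction $\tau_t$ to the set $R_{k_t}$ of nodes reachable from $k_t$ in $\digr(A^{\tau})$. By Lemma~\ref{l:digratau}(iv), $R_{k_t}$ consists exactly of the unique cycle accessible from $k_t$ together with the unique access walk from $k_t$ to that cycle; hence $\tau_t$ lies either in $\cycles^{\geq 1}(A)$ (when $k_t$ already sits on the cycle) or in $\cTag(A)$ with $o_{\tau_t}=k_t$. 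In either case the pair $(\tau_t,k_t)$ does satisfy the hypothesis of Lemma~\ref{l:xtau}. Moreover, since the $\tau$-trajectory from any $j\in R_{k_t}$ stays inside $R_{k_t}$, the access walks appearing in~\eqref{e:xkdef} are identical for $\tau$ and $\tau_t$, giving $x^{(\tau_t,k_t)}=x^{(\tau,k_t)}$ and $x^{(\tau_t,k_{t+1})}=x^{(\tau,k_{t+1})}$.

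Applying Lemma~\ref{l:xtau}(i) to $(\tau_t,k_t)$ at index $j=i$ (which lies in $R_{k_t}\setminus\{k_t\}$ thanks to the distinctness of the $k_t$) yields $x^{(\tau,k_{t+1})}\leq_i x^{(\tau,k_t)}$ for each $t=0,\ldots,m-1$. Transitivity of $\leq_i$ then telescopes this into $x^{(\tau,i)}=x^{(\tau,k_m)}\leq_i\cdots\leq_i x^{(\tau,k_0)}=x^{(\tau,k)}$, which is the desired conclusion. I expect the main obstacle to be the bookkeeping around the truncated strategies $\tau_t$: confirming that each $(\tau_t,k_t)$ really fits one of the two cases allowed by Lemma~\ref{l:xtau}, and that the generators $x^{(\tau_t,k_t)}$ agree with the original $x^{(\tau,k_t)}$ on every coordinate. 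Both points reduce immediately to Lemma~\ref{l:digratau}, after which the remainder of the argument is a routine chain of $m$ applications of Lemma~\ref{l:xtau}(i).
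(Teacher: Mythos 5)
Your proof is correct and follows essentially the same route as the paper: chain applications of Lemma~\ref{l:xtau}(i) along the $\tau$-trajectory from $k$ to $i$ and collapse the chain by transitivity of $\leq_i$. The paper simply writes $x^{(\tau,i)}\leq_i x^{(\tau,i-1)}\leq_i\cdots\leq_i x^{(\tau,1)}$ without further comment, so your additional step of truncating $\tau$ to the set reachable from $k_t$ --- ensuring each intermediate pair $(\tau_t,k_t)$ genuinely satisfies the hypothesis of Lemma~\ref{l:xtau} in the germ case, while $x^{(\tau_t,k_t)}=x^{(\tau,k_t)}$ --- is a legitimate patch of a detail the paper glosses over rather than a different argument.
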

\begin{proof}
Without loss of generality we will assume that
the nodes of $\tau$, where $\tau$ is a cycle or a germ, are numbered
in such a way that $k=1$ and $\tau(i)=i+1$ for all $i\in\supp(\tau)$ except
for the node $\operatorname{endn}(\tau,k)$ which has the greatest number
$m$. Note that if $\tau$ is a cycle then $\tau(m)=1$.

Repeatedly applying Lemma~\ref{l:xtau} part (i), we have
\begin{equation}
x^{(\tau,i)}\leq_i x^{(\tau,i-1)}\leq_i x^{(\tau,i-2)}\leq_i\ldots\leq_i x^{(\tau,1)}.
\end{equation}
\end{proof}

We now formulate and prove the main results of the paper, which constitute a combinatorial 
characterization of the supereigenvector cone $V^*(A)$.
Let us distinguish between germs whose unique cycle has weight 
strictly greater than $1$, whose set we denote by $\cTag^{>1}(A)$, and the set of germs whose unique cycle 
has weight $1$, whose set we denote by $\cTag^{=1}(A)$.

\begin{theorem}
\label{t:criter}
Let $\tau\in\cTag^{>1}(A)$ and $k=o(\tau)$ or $\tau\in\cycles^{>1}(A)$ and $k\in \supp(\tau)$.
Then $x^{(\tau,k)}$ is not an extremal if and only if one of the following conditions hold:
\begin{itemize}
\item[{\rm (i)}] there exist $i,l$ and $j$ such that $i\neq l\neq j$, $i\taccesses l\taccesses j$
and $a_{i,j}\geq w(P^{\tau}_{ij})$;
\item[{\rm (ii)}] there exist $i$ and $j$ such that $i\neq j$, $i\taccesses j$,
$j\neq\operatorname{endn}(\tau,k)$ and $a_{j,i}\geq (w(P^{\tau}_{ij}))^{-1}$.
\end{itemize}
\end{theorem}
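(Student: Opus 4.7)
The plan is to apply Proposition~\ref{p:multiorder} to the generating set $S$ of Theorem~\ref{t:kitai}. For every index $i\in\supp(\tau)\setminus\{k\}$ the vector $x^{(\tau,i)}$ already does the job: Corollary~\ref{c:xtau} gives $x^{(\tau,i)}\leq_i x^{(\tau,k)}$, while Lemma~\ref{l:xtau}(ii) combined with $\tau\in\cTag^{>1}(A)\cup\cycles^{>1}(A)$ forces $x^{(\tau,i)}\neq x^{(\tau,k)}$. Restricting $\tau$ to the subgraph of $\digr(A^{\tau})$ reachable from $i$, as in the last paragraph of the proof of Theorem~\ref{t:kitai}, moreover places $x^{(\tau,i)}$ inside $S$. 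Consequently the whole theorem collapses to a single question: does there exist $y\in S$ with $y\leq_k x^{(\tau,k)}$ and $y\neq x^{(\tau,k)}$? After rescaling so that $y_k=1$ and restricting to the subgraph reachable from $k$, any such candidate takes the form $y=x^{(\sigma,k)}$ for an admissible strategy $\sigma$ whose reachable component from $k$ is either a germ with $o_\sigma=k$ or a cycle containing $k$; the order relation $y\leq_k x^{(\tau,k)}$ then unpacks into the two conditions (a) $\supp(\sigma)\subseteq\{l:k\taccess l\}$ and (b) $w(P^{\sigma}_{kl})\geq w(P^{\tau}_{kl})$ for every $l\in\supp(\sigma)$.

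For the ``if'' direction I would build $\sigma$ by hand. Given (i), set $\sigma(m)=\tau(m)$ for $m\neq i$ and $\sigma(i)=j$: the node $l$ then drops out of the component reachable from $k$ in $\digr(A^{\sigma})$, and the hypothesis $a_{ij}\geq w(P^{\tau}_{ij})$, together with the $\tau$-cycle weight being strictly greater than $1$, ensures that every cycle of $\digr(A^{\sigma})$ still carries weight at least $1$, so that $y=x^{(\sigma,k)}$ belongs to $S$ and is strictly smaller than $x^{(\tau,k)}$ in $\leq_k$ at the coordinate $l$. Given (ii), set $\sigma(j)=i$ and $\sigma(m)=\tau(m)$ for $m\neq j$: the inequality $a_{ji}\geq(w(P^{\tau}_{ij}))^{-1}$ is exactly the statement that the new cycle $i\to\tau(i)\to\cdots\to j\to i$ has weight at least $1$, so $\sigma$ is admissible, and the hypothesis $j\neq\operatorname{endn}(\tau,k)$ guarantees that at least one node of $\supp(\tau)$ strictly past $j$ is truncated in $\sigma$, which delivers $y\neq x^{(\tau,k)}$.

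For the ``only if'' direction, given such a $y=x^{(\sigma,k)}$ I would follow the $\sigma$-trajectory $k,\sigma(k),\sigma^2(k),\ldots$ alongside the $\tau$-trajectory starting at $k$. Let $i$ be the first node where they diverge and write $j=\sigma(i)\neq\tau(i)$; condition (a) forces $j$ to lie on the $\tau$-walk from $k$. If $j$ occurs strictly after $i$ on that walk, then $l=\tau(i)$ is strictly between $i$ and $j$ on it, and telescoping $w(P^{\sigma}_{ki})=w(P^{\tau}_{ki})$ against $w(P^{\sigma}_{kj})\geq w(P^{\tau}_{kj})$ yields $a_{ij}\geq w(P^{\tau}_{ij})$, which is condition (i). If $j$ occurs at or before $i$ on the walk, the $\sigma$-trajectory closes a new cycle $j\to\cdots\to i\to j$ of weight at least $1$; admissibility of this cycle forces $a_{ij}\geq(w(P^{\tau}_{ji}))^{-1}$, which after interchanging the roles of $i$ and $j$ is exactly condition (ii), and the necessity of pruning at least one $\tau$-node to secure $y\neq x^{(\tau,k)}$ translates into the exclusion $i\neq\operatorname{endn}(\tau,k)$ demanded there.

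The main obstacle I expect is the careful verification, in both directions, that the reachable-from-$k$ restriction of each modified strategy $\sigma$ genuinely lies in one of the two index classes of $S$ (germ with $o_\sigma=k$, or cycle through $k$) and still carries cycle weight at least $1$, together with the precise identification of $\operatorname{endn}(\tau,k)$ as the combinatorial boundary that separates a genuinely new generator from one that merely re-traces $x^{(\tau,k)}$.
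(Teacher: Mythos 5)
Your overall strategy coincides with the paper's: reduce via Proposition~\ref{p:multiorder}, Corollary~\ref{c:xtau} and Lemma~\ref{l:xtau} to the single question of whether some generator precedes $x^{(\tau,k)}$ in the preorder $\leq_k$; then, for the ``if'' direction, build the dominated generator by rerouting one edge of $\tau$, and, for the ``only if'' direction, locate the first divergence of the $\sigma$- and $\tau$-trajectories from $k$. The ``only if'' half and the treatment of condition (ii) match the paper's argument and are sound.

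There is, however, a genuine gap in the ``if'' direction for condition (i). You assert that after setting $\sigma(i)=j$ ``the node $l$ drops out of the component reachable from $k$'' and you rest the required distinctness $x^{(\sigma,k)}\neq x^{(\tau,k)}$ entirely on that pruning. This is false precisely when $\tau$ is a germ whose cycle begins at $\tau(i)$ (in the paper's numbering, $o_{c\tau}=i+1$): then the set reachable from $k$ in $\digr(A^{\sigma})$ is still all of $\supp(\tau)$ — no node between $i$ and $j$ is pruned, because each such node is re-entered around the cycle through $m$ and $o_{c\tau}$. In that sub-case distinctness must instead be read off the weights: for $l$ strictly between $\tau(i)$ and $j$ one has $w(P^{\sigma}_{kl})\geq w(P^{\tau}_{kl})\cdot w(c)$, and only the standing hypothesis $w(c)>1$ makes this inequality strict. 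You invoke $w(c)>1$ solely to certify admissibility of $\sigma$, not to certify $x^{(\sigma,k)}\neq x^{(\tau,k)}$, so as written the argument does not establish non-extremality in this sub-case. This is exactly the point at which Theorem~\ref{t:criter} separates from Theorem~\ref{t:criter-var}, where $w(c)=1$ forces the strict inequality $a_{i,j}>w(P^{\tau}_{ij})$ when $o_{c\tau}=\tau(i)$; the repair is the case split of \eqref{e:taupdef} together with the ensuing weight comparison in the paper's proof.
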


In the case of $\cTag^{=1}(A)$, we have to replace condition (i) by a more elaborate one.

\begin{theorem}
\label{t:criter-var}
Let $\tau\in\cTag^{=1}(A)$ and $k=o(\tau)$.
Then $x^{(\tau,k)}$ is not an extremal if and only if one of the following conditions hold:
\begin{itemize}
\item[{\rm (\u{\i})}] there exist $i,l$ and $j$ such that $i\neq l\neq j$
and either $\tau(i)\neq o_{c\tau}$ and $a_{i,j}\geq w(P_{ij}^{\tau})$ 
or $\tau(i)=o_{c\tau}$ and $a_{i,j}>w(P_{ij}^{\tau})$.
\item[{\rm (ii)}] there exist $i$ and $j$ such that $i\neq j$, $i\taccesses j$,
$j\neq\operatorname{endn}(\tau,k)$ and $a_{j,i}\geq (w(P^{\tau}_{ij}))^{-1}$.
\end{itemize}
\end{theorem}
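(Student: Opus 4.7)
The plan is to mirror the proof of Theorem~\ref{t:criter} using Proposition~\ref{p:multiorder}, while isolating the one new subtlety introduced by the unit cycle weight in $\tau$. By Proposition~\ref{p:multiorder}, $x^{(\tau,k)}$ fails to be extremal iff for every $i\in\suppp(x^{(\tau,k)})$ there is a generator $y^i$ in the set $S$ of Theorem~\ref{t:kitai} with $y^i\leq_i x^{(\tau,k)}$ and $y^i\neq x^{(\tau,k)}$. Since $\tau$ is a germ and $k=o_\tau$, we have $\suppp(x^{(\tau,k)})=\supp(\tau)$, and for every $i\in\supp(\tau)\setminus\{k\}$ the choice $y^i=x^{(\tau,i)}$ already works: Corollary~\ref{c:xtau} gives $y^i\leq_i x^{(\tau,k)}$, and $\suppp(x^{(\tau,i)})$ is a strict subset of $\supp(\tau)$ because $k=o_\tau$ is inaccessible from $i$ in $\digr(A^\tau)$. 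The question therefore reduces to characterising when a witness $y^k=x^{(\tau',k')}\in S$ exists for the single index $i=k$.

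For the "if" direction I would produce such a witness from each condition. Given (ii), the back-edge $(j,i)$ together with $P^\tau_{ij}$ forms a cycle of weight at least $\bunity$, yielding either an admissible germ with access walk $P^\tau_{ki}$ or an element of $\cycles^{\geq 1}(A)$; the resulting generator is $\leq_k x^{(\tau,k)}$ by~\eqref{e:xkdef} and differs from $x^{(\tau,k)}$ because its support excludes $\operatorname{endn}(\tau,k)$. Given (\u{\i}), form a new strategy $\tau'$ by replacing the $\tau$-walk segment from $i$ to $j$ by the shortcut edge $(i,j)$; the inequality $a_{i,j}\geq w(P^\tau_{ij})$ (strict in the sub-case $\tau(i)=o_{c\tau}$) together with~\eqref{e:xkdef} yields $x^{(\tau',k)}\leq_k x^{(\tau,k)}$, and the skipped node $l$ certifies that the two vectors differ. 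For the converse, given a witness $y^k=x^{(\tau',k')}$ one compares the walk structures of $\digr(A^{\tau'})$ and $\digr(A^\tau)$ on a case-by-case basis (germ versus nonnegative cycle for $\tau'$) and reads off either a shortcut giving (\u{\i}) or a back-edge giving (ii).

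The main obstacle, and the precise point where Theorem~\ref{t:criter-var} departs from Theorem~\ref{t:criter}, is justifying the strict-inequality sub-case of (\u{\i}). When $\tau(i)=o_{c\tau}$ the shortcut skips the edge that joins the access walk to the cycle, so the $\tau'$-walk from $k$ to any cycle node $m$ re-enters the cycle at $j$ and, exploiting that the cycle has weight exactly $\bunity$, accumulates a weight equal to $w(P^\tau_{km})\cdot a_{i,j}\cdot (w(P^\tau_{ij}))^{-1}$. Equality $a_{i,j}=w(P^\tau_{ij})$ therefore preserves every walk weight from $k$, giving $x^{(\tau',k)}=x^{(\tau,k)}$ so that $\tau'$ cannot serve as a witness; strict inequality is what guarantees a genuinely smaller generator. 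Verifying this weight cancellation rigorously in the $\tau(i)=o_{c\tau}$ case, and checking that no analogous cancellation occurs in any other shortcut configuration, is the principal combinatorial bookkeeping the proof must perform, and is what pins down the dichotomy in condition (\u{\i}).
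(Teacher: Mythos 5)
Your proposal follows essentially the same route as the paper: reduce via Proposition~\ref{p:multiorder} and Corollary~\ref{c:xtau} to finding a witness for the single index $k=o_{\tau}$, build that witness from a shortcut edge (condition (\u{\i})) or a back edge (condition (ii)), and observe that when the shortcut source satisfies $\tau(i)=o_{c\tau}$ the support of the modified strategy is unchanged and, since the cycle weight is $1$, only a strict inequality $a_{i,j}>w(P^{\tau}_{ij})$ yields a generator genuinely below $x^{(\tau,k)}$ --- which is exactly how the paper justifies the dichotomy in (\u{\i}). The one slip is your second paragraph's claim that ``the skipped node $l$ certifies that the two vectors differ,'' which fails precisely in the sub-case $\tau(i)=o_{c\tau}$ (no node is dropped from the support there), but your third paragraph supplies the correct argument for that sub-case.
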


\begin{proof}[Proof of Theorem~\ref{t:criter} and Theorem~\ref{t:criter-var}] 
Without loss of generality we will assume that
the nodes of $\tau$, where $\tau$ is a cycle or a germ, are numbered
in such a way that $k=1$ and $\tau(i)=i+1$ for all $i\in\supp(\tau)$ except
for the node $\operatorname{endn}(\tau,k)$ which has the greatest number
$m$. Note that if $\tau$ is a cycle then $\tau(m)=1$, and otherwise $\tau(m)=o_{c\tau}$.
With such numbering, conditions (i), (ii) and (\u{\i}) take the following form:
\begin{itemize}
\item[{\rm (i')}] there exist $i,j$ such that $i+1<j$
and $a_{i,j}\geq w(P^{\tau}_{ij})$;
\item[{\rm (ii')}] there exist $i,j$ such that $i<j$,
$j\neq m$ and $a_{j,i}\geq (w(P^{\tau}_{ij}))^{-1}$.
\item[{\rm (\u{\i}')}] there exist $i,j$ such that $i<j$,
and either $o_{c\tau}\neq i+1$ and $a_{i,j}\geq w(P_{ij}^{\tau})$ 
or $o_{c\tau}=i+1$ and $a_{i,j}>w(P_{ij}^{\tau})$
\end{itemize}

The ``only if'' part: Suppose that $x^{(\tau,1)}$ is not an extremal.
As $V^*(A)=\spann(S)$ where $S$ is defined in~\eqref{e:kitai},
by Proposition~\ref{p:multiorder} and Theorem~\ref{t:kitai}, there exists $\tau'$ and $s$ such that $x^{(\tau',s)}\leq_1 x^{(\tau,1)}$ and
$x^{(\tau',s)}\neq x^{(\tau,1)}$.  As $x^{(\tau',s)}\leq_1 x^{(\tau,1)}$, it follows that
$\supp(\tau')\subseteq\supp(\tau)$ 
and that $1\in\supp(\tau')$ so that $s\longrightarrow_{\tau'} 1$. 
Also since $x^{(\tau',1)}\leq_1 x^{(\tau',s)}$ by Corollary~\ref{c:xtau},
and since $\leq_1$ is transitive, we can assume $s=1$.

Now suppose there exist $i,j$ such that $i+1<j$ and $j=\tau'(i)$. Consider the
least such $i$ and $j$. Condition $x^{(\tau',1)}\leq_1 x^{(\tau,1)}$ means that
$x_l^{(\tau',1)}\leq x_l^{(\tau,1)}$ for all $l\in\supp(\tau)$. In terms of
walks, this means that $w(P_{1l}^{\tau'})^{-1}\leq w(P_{1l}^{\tau})^{-1}$, or
equivalently, $w(P_{1l}^{\tau'})\geq w(P_{1l}^{\tau})$  for all  
$l\in\supp(\tau)$. In particular, this implies $a_{i,j}\geq w(P_{ij}^{\tau})$,
thus we have (i').

Suppose that there are no such $i,j$. Then it can be verified that we have
$\tau'(s)=s+1$ for all $s\in\supp(\tau')$ except for one node $j$ for which
$i=\tau'(j)<j$. However, if $j=m$ then $x^{(\tau',1)}=x^{(\tau,1)}$, a
contradiction. Hence $j<m$, and the edge $(j,i)$ belongs to the unique cycle of $\tau'$.
The other edges of that cycle form the walk $P^{\tau}_{ij}$ and the cycle is
in~$\cycles^{\geq 1}(A)$, hence we have (ii').

It remains to prove that if $\tau\in\cTag^{=1}(A)$ and not (i') or (ii'), then we have (\u{\i}').
So suppose that condition (ii') does not hold, $\tau\in\cTag^{=1}(A)$ and there exist only $i$ and $j$
with $i<j$, $o_{c\tau}=i+1$ and, by contradiction, that $a_{i,j}=w(P_{ij}^{\tau})$ for all 
such $i$ and $j$. Then we have $x^{(\tau',1)}\leq_1 x^{(\tau,1)}$ only for $\tau'=\tau$ (trivially),
or for $\tau'$ such that $\supp(\tau')=\supp(\tau)$,
$\tau'(i)=j$ for some selection of $j$
and $\tau'(k)=\tau(k)$ for all $k\in\supp(\tau)\backslash\{i\}$.
However, it can be checked that
$x^{(\tau',1)}=x^{(\tau,1)}$ for all such $\tau'$ since $\suppp(x^{(\tau',1)})=\suppp(x^{(\tau,1)})$ and the
weight of the unique cycle of $\tau$ is $1$. This implies that there are no vectors preceding 
$x^{(\tau,1)}$ with respect to $\leq_1$ and different from $x^{(\tau,1)}$, a contradiction. Hence we have (\u{\i}').

The ``if'' part: By Proposition~\ref{p:multiorder} and Lemma~\ref{l:xtau}, it is enough to
show that there exists $\tau'\in\cTag(A)\cup\cycles^{\geq 1}(A)$ such that
$x^{(\tau',1)}\leq_1 x^{(\tau,1)}$.

Suppose that (i') or (\u{\i}') holds, and take any such $i$ and $j$.
Denote by $c$ the (unique) cycle of $\tau$.
Define $\tau'$ by
\begin{equation}
\label{e:taupdef}
\begin{split}
\supp(\tau')&=
\begin{cases}
\{1,\ldots,i\}\cup\{j,\ldots,m\}, &\text{if $o_{c\tau}\leq i$ or $o_{c\tau}\geq j$},\\
\{1,\ldots,i\}\cup\{o_{c\tau},\ldots,m\}, &\text{if $i<o_{c\tau}<j$}.
\end{cases}\\
\tau'(l)&=
\begin{cases}
\tau(l), &\text{if $l\in\supp(\tau')$, $l\neq i$,}\\
j, &\text{if $l=i$.}
\end{cases}
\end{split}
\end{equation}
The definition of $\tau'$ and the inequality $a_{i,j}\geq w(P_{ij}^{\tau})$ immediately imply
$w(P_{1l}^{\tau'})\geq w(P_{1l}^{\tau})$ for all $l\in\{1,\ldots,i\}\cup\{j,\ldots,m\}$. For the
case when $l\in\{o_{c\tau},\ldots,j\}$ (if $i<o_{c\tau}<j$), observe that
$w(P_{1l}^{\tau'})\geq w(P_{1l}^{\tau})\cdot w(c)\geq w(P_{1l}^{\tau})$.
Thus $w(P_{1l}^{\tau'})\geq w(P_{1l}^{\tau})$ holds for all $l\in\supp(\tau')$,
implying the inequalities $x_l^{(\tau',1)} (x_1^{(\tau',1)})^{-1}\leq
x_l^{(\tau,1)} (x_1^{(\tau,1)})^{-1}$ for all $l\in\supp(\tau')$. Hence
$x^{(\tau',1)}\leq_1 x^{(\tau,1)}$. It remains to show that 
$x^{(\tau',1)}\neq x^{(\tau,1)}$.

Observe that $\supp(\tau')$ is a proper subset of $\supp(\tau)$ unless
when $o_{c\tau}=i+1$
(that is, the cycle begins at the next node after $i$). 
If $\supp(\tau')$ is a proper subset of $\supp(\tau)$ then clearly
$x^{(\tau',1)}\neq x^{(\tau,1)}$. If $o_{c\tau}=i+1$,
we verify that
for all $l\in\{o_{c\tau},\ldots,j\}$, we have that
either $w(P_{1l}^{\tau'}> w(P_{1l}^{\tau})\cdot w(c) \geq w(P_{1l}^{\tau})$ 
(if $a_{i,j}>w(P_{ij}^{\tau})$) or
$w(P_{1l}^{\tau'}\geq w(P_{1l}^{\tau})\cdot w(c) > w(P_{1l}^{\tau})$
(if $w(c)>1$), and then also $x^{(\tau',1)}\neq x^{\tau}$.


If (i') or (\u{\i}') do not hold but (ii') does, then define $\tau'$ by
\begin{equation}
\supp(\tau')=\{1,\ldots,j\},\quad
\tau'(l)=
\begin{cases}
i, &\text{if $l=j$},\\
\tau(l)=l+1, &\text{if $l<j$}.
\end{cases}
\end{equation}
Then the condition $a_{j,i}\geq w(P_{ij}^{\tau})^{-1}$ implies that
$P_{ij}^{\tau}$ and $(j,i)$ constitute a nonnegative cycle, hence $\tau'\in\cTag(A)\cup\cycles^{\geq 1}(A)$
and $\supp(\tau')$ is a proper
subset of $\supp(\tau)$. Thus we have $x^{(\tau',1)}\leq_1 x^{(\tau,1)}$.
\end{proof}

It remains to consider the case when $\tau$ is a cycle with weight $1$.
The set of such cycles is denoted by $\cycles^{=1}(A)$.
In this case
all vectors $x^{(\tau,i)}$ are proportional to 
each other, for all $i\in \supp(\tau)$.  Therefore we will denote $x^{\tau}=x^{(\tau,i)}$, where
$i$ is an arbitrary index of $\supp(\tau)$.

\begin{theorem}
\label{t:unitycase}
Let  $\tau\in\cycles^{=\bunity}(A)$ and $i\in\supp(\tau)$.
Then $x^{\tau}$ is not an extremal if and only if there
exist two edges $(k_1,l_1)$ and $(k_2,l_2)$ such that
$k_1,l_1,k_2,l_2\in\supp(\tau)$,
$l_1\notin\tau(k_1)$, $l_2\notin\tau(k_2)$,
$k_1\neq k_2$, $a_{k_1,l_1}\cdot w(P^{\tau}_{l_1k_1})\geq 1$
and $a_{k_2,l_2}\cdot w(P^{\tau}_{l_2k_2})\geq 1$.
\end{theorem}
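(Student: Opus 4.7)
The plan is to apply Proposition~\ref{p:multiorder} to $\cK = V^*(A)$ with the generating set $S$ from Theorem~\ref{t:kitai}, interpreting ``$y^i \neq x^{\tau}$'' as ``$y^i$ and $x^{\tau}$ are not positive scalar multiples of each other'' (the relation $\leq_i$ is itself projective). After renumbering so that $\supp(\tau) = \{1,\ldots,m\}$ and $\tau(j) = j+1$ cyclically, the identity $w(P^{\tau}_{k,l}) \cdot w(P^{\tau}_{l,k}) = w(\tau) = 1$ rewrites the condition $a_{k,l} \cdot w(P^{\tau}_{l,k}) \geq 1$ as $a_{k,l} \geq w(P^{\tau}_{k,l})$. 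Equivalently, the chord $(k,l)$ with $l \neq \tau(k)$ together with $P^{\tau}_{l,k}$ closes a cycle $c \in \cycles^{\geq 1}(A)$ whose support is a proper subset of $\supp(\tau)$.

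For the ``if'' direction, let $c_1, c_2 \in \cycles^{\geq 1}(A)$ be the cycles formed from the two given chords and put $G_j := \supp(\tau) \setminus \supp(c_j)$. For each $i \in \supp(\tau)$ I provide a generator $y^i \in S$ with $y^i \leq_i x^{\tau}$ which is not proportional to $x^{\tau}$. If $i \in \supp(c_j)$, take $y^i := x^{(c_j,i)}$; it has strictly smaller support than $x^{\tau}$, and the required inequality $w(P^{c_j}_{i,k}) \geq w(P^{\tau}_{i,k})$ holds trivially off the chord and reduces on the chord segment to $a_{k_j,l_j} \geq w(P^{\tau}_{k_j,l_j})$, the good-chord condition. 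If $i \in G_1 \cap G_2$, the hypothesis $k_1 \neq k_2$ together with bijectivity of $\tau$ on its support yields $\tau(k_1) \neq \tau(k_2)$, so some $j$ has $i \neq \tau(k_j)$; then the admissible germ $\sigma \in \cTag(A)$ whose tail is the $\tau$-walk from $i$ to $l_j$ and whose cycle is $c_j$ has a proper subset of $\supp(\tau)$ as support, and $y^i := x^{(\sigma,i)}$ works because its walks from $i$ coincide with the corresponding $\tau$-walks on $\supp(\sigma)$. Proposition~\ref{p:multiorder} then yields the non-extremality of $x^{\tau}$.

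For the ``only if'' direction, suppose $x^{\tau}$ is not extremal, and for each $i \in \supp(\tau)$ let $y^i = x^{(\tau^{(i)},s^{(i)})} \in S$ satisfy $y^i \leq_i x^{\tau}$ and non-proportionality, so $\tau^{(i)} \in \cTag(A) \cup \cycles^{\geq 1}(A)$ and $i \in \supp(\tau^{(i)}) \subseteq \supp(\tau)$. The unique walk from $i$ in $\digr(A^{\tau^{(i)}})$ cannot consist only of $\tau$-edges, since this would force $\tau^{(i)} = \tau$ and $y^i$ proportional to $x^{\tau}$; let $(k^{(i)},l^{(i)})$ be the edge of its first non-$\tau$ step. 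Evaluating $y^i \leq_i x^{\tau}$ at index $l^{(i)}$ and cancelling the common $\tau$-walk from $i$ to $k^{(i)}$ gives $a_{k^{(i)},l^{(i)}} \geq w(P^{\tau}_{k^{(i)},l^{(i)}})$, which is the good-chord condition. Choose any $i_1$, obtain the chord $(k_1,l_1)$, then take $i_2 := \tau(k_1)$ and $(k_2,l_2)$. Suppose for contradiction $k_2 = k_1$. The walk from $i_2 = \tau(k_1)$ in $\tau^{(i_2)}$ then travels $\tau$-edges for a full turn around the cycle, visiting all of $\supp(\tau)$ before using the chord; the chord thus re-enters an already visited node, so $P^{\tau^{(i_2)}}_{i_2,j} = P^{\tau}_{i_2,j}$ for every $j \in \supp(\tau)$, making $y^{i_2}$ a positive scalar multiple of $x^{\tau}$ and contradicting the choice of $y^{i_2}$. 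Hence $k_1 \neq k_2$ and both chords are good.

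The main technical obstacle is the case $i \in G_1 \cap G_2$ in the ``if'' direction, where neither $c_1$ nor $c_2$ individually suffices and one must call on an admissible germ; the hypothesis $k_1 \neq k_2$ is used here, via the bijectivity of $\tau$, precisely to guarantee that at least one germ has proper support. The analogous delicate point in the ``only if'' direction is checking that no second chord can hide in $\tau^{(i_2)}$ once the walk from $i_2$ has exhausted $\supp(\tau)$; Lemma~\ref{l:digratau} then forces $\tau^{(i_2)}$ into a single rho-shape whose walks from $i_2$ coincide with those of $\tau$, thereby pinning down the required proportionality.
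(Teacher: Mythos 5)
Your proposal is correct and follows essentially the same route as the paper's proof: both apply Proposition~\ref{p:multiorder} to the generating set of Theorem~\ref{t:kitai}, construct the ``if''-direction witnesses by rerouting $\tau$ along each chord and choosing, for each $i$, the chord with $i\neq\tau(k_j)$, and in the ``only if'' direction extract a chord from each witnessing generator and use the node $\tau(k_1)$ to force a second chord with a different tail node. Your write-up is in fact more detailed and careful than the paper's rather terse argument.
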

\begin{proof} Let $x^{\tau}$ be not an extremal, then for each $i\in \supp(\tau)$
there exist $\tau_i$ and $i'$ such that $x^{(\tau_i,i')}\leq_i x^{(\tau,i)}$, and hence  
$(k_i,l_i)$ with $k_i,l_i\in\supp(\tau)$ and $a_{k_il_i}\cdot w(P^{\tau}_{l_ik_i})\geq 1$. Indeed, if there is no such edge then the 
domain of any cycle or germ other than $\tau$ includes a node not in $\supp(\tau)$, while all generators 
derived from $\tau$ are proportional to $x^{\tau}$.
Furthermore, some
$k_i$'s should be different, at least for two values of $i$. Indeed, if all $k_i$ are equal to the same index denoted by $k$, then we have 
$x^{(\tau_i,\tau(k))}=x^{\tau}$ for all $i$, while $\tau(k)$ does not belong to the support of any other
vector derived from the germ $\tau_i$, for any $i$.

For the converse implication, let $(k_1,l_1)$ and $(k_2,l_2)$ be the two edges satisfying given conditions, and let $\tau_1$ and
$\tau_2$ be defined by
\begin{equation}
\tau_1(i)=
\begin{cases}
\tau(i), & \text{if $i\in \supp(\tau)\backslash\{k_1\}$},\\
l_1, & \text{if $i=k_1$}.
\end{cases}, \quad
\tau_2(i)=
\begin{cases}
\tau(i), & \text{if $i\in \supp(\tau)\backslash\{k_2\}$},\\
l_2, & \text{if $i=k_2$}.
\end{cases}
\end{equation}

Since $k_1\neq k_2$, for each $i\in \supp(\tau)$, 
either $i\neq \tau(k_1)$ or $i\neq\tau(k_2)$,
and we define $\tau':=\tau_1$ or $\tau':=\tau_2$ respectively. Then we have
$x^{(\tau',i)}\leq_i x^{\tau}$ and $x^{(\tau',i)}\neq x^{\tau}$. As such a
vector can be found for any $i$, $x^{\tau}$ is not extremal.

\end{proof}

\section{Acknowledgement}
The author is grateful to Professor Peter Butkovi\v{c} for useful discussions and advice.

\end{document}